\numberwithin{equation}{section}
\newtheorem{theorem}{Theorem}[section]
\newtheorem{lemma}[theorem]{Lemma}
\newtheorem{proposition}[theorem]{Proposition}
\newtheorem{corollary}[theorem]{Corollary}
\newtheorem{conjecture}[theorem]{Conjecture}
\theoremstyle{definition}
\newtheorem{definition}[theorem]{Definition}
\newtheorem{remark}[theorem]{Remark}
\numberwithin{equation}{section}
\begin{document}


\title[Apolar Star Configurations]{Special apolar subset: the case of star configurations }
\thanks{The authors acknowledge that the present research has been partially supported by MIUR grant Dipartimento di Eccellenza 2018-2022 (E11G18000350001)}

\author{Iman Bahmani Jafarloo}
\address{DISMA, Dipartimento di Scienze Matematiche, Politecnico di Torino, Corso Duca degli Abruzzi 24, 10129 Turin, Italy. \newline \hspace*{0.3cm}  Dipartimento di Matematica, Universit\`a degli Studi di Torino, Via Carlo Alberto 10, 10123 Turin, Italy.}
\email{iman.bahmanijafarloo@polito.it,iman.bahmanijafarloo@unito.it}

\author{Enrico Carlini}
\address{DISMA, Dipartimento di Scienze Matematiche, Politecnico di Torino, Corso Duca degli Abruzzi 24, 10129 Turin, Italy.}
\email{enrico.carlini@poito.it}

\keywords{star configuration, Waring problem, apolar theory, ideals of points}
\subjclass[2010]{14T05, 13P05, 14N20}

\begin{abstract}
In this paper we consider a generic  degree $d$  form $ F $ in $n+1$ variables. In particular, we investigate the existence of star configurations apolar to $F$, that is the existence of apolar sets of points obtained by the $ n $-wise intersection of $ r $ general hyperplanes of $ \mathbb{P}^n $. We present a complete answer for all values of $(d,r,n)$ except for $(d,d+1,2)$ when we present an algorithmic approach.
\end{abstract}

\maketitle


\section{Introduction}

The study of the decomposition of tensors as the sum of simpler tensors has attracted a huge amount of research from pure and applied mathematics, see for example \cite{paperone} and the references therein. In spite of the many efforts many basic questions stay open, even for special family of tensors, such as symmetric tensors. Symmetric tensors are of particular interest since they correspond to homogeneous polynomials or forms. Writing a degree $d$ form $F$ as the sum of $d$-th powers is the so called Waring problem for forms, see \cite{geramita1996inverse}, namely one wants to find a sum of powers decomposition of $F$, that is an expression of the form
\[
F=L_1^d+\cdots+L_s^d,
\]
where the forms $ L_i $ have degree one.

One of the most interesting quantities related to sum of power decomposition is the (Waring) rank of a form $F$, denoted as $\mathrm{rk}(F)$, which is defined as the minimal number of linear forms need to write down $F$ as a sum of powers. We note that, in spite of the numerous efforts, the rank is only known for special family of forms, for examples: quadrics, that is $d=2$, binary forms, that is forms in two variables, cubics in three variables, and monomials, see \cite{carlini2012thesolution}. Moreover, the value of the rank is known for sufficiently general forms, sometimes called generic forms, see \cite{alexander1995polynomial}.

Using the Apolarity Lemma, see Lemma \ref{ourapolaritylemma}, one can study sum of powers decompositions of $F$ by studying sets of points apolar  to $F$, that is set of points $\mathbb{X}$ having the defining ideal $I(\mathbb{X})$ contained in the apolar ideal of $F$, see Subsection \ref{sectionbasicfacts}. The Apolarity Lemma allows us to give a geometrical flavor to the Waring problem, for example,  the rank of $F$ is just the minimal degree of a smooth apolar subset to $F$.

Very little is known on the geometry of apolar subsets in general. However, there are cases for which we know quite a lot. This is the case, for example, of monomials and of cusps. For monomials, we know that the minimal apolar subsets are necessarily complete intersections: see \cite{BUCZYNSKA201345} for a proof and \cite{carlini2017real} for an interesting application related to the real Waring rank. For cusps, see \cite{carlini2017waring}, all minimal apolar subsets split as a single point union a set of degenerate points, that is points lying on a hyperplane.

In this paper we investigate the connection between apolar subsets and star configuration sets of points. A star configuration set of points $\mathbb{X}(r)$, see \cite{GERAMITA2013279,starongenerichyper}, is a set of points in $\mathbb{P}^n$ obtained as the $n$-wise intersection of $r$ hyperplanes in general position. The interest in star configuration set of points is well established for two different reasons. On the one hand star configurations are general enough, for example with respect to the Hilbert function, see Theorem \ref{theoremin"n"}. On the other hand, star configurations are very special, for example with respect to their ideals, see again Theorem \ref{theoremin"n"}. This mix of generality and speciality makes star configurations of special interest.

In particular, we consider the following question: does the generic degree $d$ form in $n+1$ variables have an apolar star configuration $\mathbb{X}(r)$?
We give a complete answer to this question for all 3-ples $(d,r,n)$ except for the family $(d,d+1,2)$ for which we only present some special results for some values of $d$.

The paper is structured as follows: in Section 2 we recall some basic facts and we introduce some useful technical results. In Section 3 we present our results. In Section 4 we present same final remarks and we point to further line of investigation.

\textbf{Acknowledgments.} The first author wishes to thank the second author (his Ph.D. advisor) for the patient guidance, encouragement, and advice. The authors owe also special thank to the anonymous referee for the comments that improved the manuscript.

\section{Basic facts}\label{sectionbasicfacts}

\subsection{Apolarity}

In this subsection we briefly recall some facts about apolarity theory, see also \cite{geramita1996inverse} and \cite{iarrobino1999power}.
Let $S=\mathbb{C}[x_0,\ldots,x_n]$ and $T=\mathbb{C}[y_0,\ldots,y_n]$. We make $S$ into a $T$-module via differentiation, that is we think of $y_j =\partial/\partial x_j $. For any form $ F $ of degree $ d $ in $ S $, we define the ideal $ F^\perp\subseteq T $ as follows:
$$F^\perp=\{\partial\in T:\partial F=0\}. $$
Given a homogeneous ideal $ I\subseteq T $ we denote by
$$ \operatorname{HF}(T/I,i)=\dim_kT_i-\dim_kI_i $$
its $ Hilbert\ function $ in degree $ i $. It is well known that for all $i >>0$ the function $ \operatorname{HF}(T/I,i) $ is a polynomial function with rational coefficients, called the $ Hilbert $ $polynomial $ of $ T/I $. We say that an ideal $I\subseteq T $ is $ one\ dimensional $ if the Krull
dimension of $ T/I $ is one, equivalently the Hilbert polynomial of $ T/I $ is some integer constant, say $ s $. The integer $ s $ is then called the $ multiplicity $ of $ T/I $. If, in addition, $ I $ is a radical ideal, then $ I $ is the ideal of a set of $ s $ distinct points in $ \mathbb{P}^n=\mathbb{P}(S_1) $. We will use the
fact that if $ I $ is a one dimensional saturated ideal of multiplicity $ s $, then  $ \operatorname{HF}(T/I,i) $ is always $ \leqslant s $.

The following Lemma, which we will call the Apolarity Lemma, is a consequence of
\cite[Lemma 1.31]{iarrobino1999power}.

\begin{lemma}[Apolarity Lemma]\label{ourapolaritylemma}
A degree d form $F\in S$ can be written as
$$F=\displaystyle\sum_{i=1}^s \alpha_i{L}^d_i,\ {L}_i\in S_1\  pairwise\ linearly\ independent,\ \alpha_i\in\mathbb{C} $$
if and only if there exists $ I\subseteq F^\perp $ such that $ I $ is the ideal of a set of $ s $ distinct points in $ \mathbb{P}(S_1) $.
\end{lemma}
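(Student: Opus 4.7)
The plan is to prove both directions using the perfect contraction pairing $(\partial,G)\mapsto \partial(G)$ between $T_d$ and $S_d$, together with the elementary identity that controls how differential operators act on powers of linear forms. Specifically, for $L = a_0x_0+\cdots+a_nx_n$ and $\partial\in T_k$ with $k\le d$,
\[
\partial(L^d) \;=\; \frac{d!}{(d-k)!}\,\partial(a_0,\ldots,a_n)\,L^{d-k},
\]
while $\partial(L^d)=0$ when $k>d$; here $\partial(a_0,\ldots,a_n)$ means evaluating $\partial$, regarded as an ordinary polynomial in $y_0,\ldots,y_n$, at the coordinate vector of $L$. This is a direct monomial check via the Leibniz rule.

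For the $(\Rightarrow)$ direction, write $F=\sum_{i=1}^{s}\alpha_iL_i^d$ with the $L_i$ pairwise independent, and let $\mathbb{X}=\{P_1,\ldots,P_s\}\subseteq\mathbb{P}(S_1)$ be the corresponding point set. Take any homogeneous $\partial\in I(\mathbb{X})_k$; by definition $\partial$ vanishes at the coordinate vector of each $L_i$. The identity above then forces $\partial(L_i^d)=0$ for all $i$ (whether $k\le d$, via the vanishing of $\partial(a^{(i)})$, or $k>d$, for degree reasons), and linearity yields $\partial(F)=0$. Hence $I(\mathbb{X})\subseteq F^\perp$.

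For the $(\Leftarrow)$ direction, I would argue by duality. Let $I\subseteq F^\perp$ be the saturated radical ideal of $s$ distinct points $P_1,\ldots,P_s$, pick pairwise independent linear forms $L_1,\ldots,L_s\in S_1$ lifting them, and set $W=\mathrm{span}_{\mathbb{C}}(L_1^d,\ldots,L_s^d)\subseteq S_d$. The identity at $k=d$ identifies the annihilator of $W$ in $T_d$ with exactly $I_d$. Perfectness of the apolarity pairing on degree $d$ then gives the biduality $W=(I_d)^\perp$ inside $S_d$, so $F\in W$ is equivalent to $I_d\subseteq(F^\perp)_d$. The hypothesis $I\subseteq F^\perp$ supplies this inclusion on the nose, yielding scalars $\alpha_i\in\mathbb{C}$ with $F=\sum_{i=1}^{s}\alpha_iL_i^d$.

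The main obstacles I anticipate are not computational but bookkeeping: one must choose lifts of each point $P_i$ to a definite linear form $L_i$ (with the ambiguity absorbed into $\alpha_i$, since $\alpha L^d=(\zeta L)^d$ for $\zeta^d=\alpha$), and one must invoke the perfectness of the pairing $T_d\times S_d\to\mathbb{C}$ so that passing to double annihilators actually recovers $W$. This last point is what lets the argument reduce the global inclusion $I\subseteq F^\perp$ to its degree-$d$ slice; once it is in hand, both directions are one-line consequences of the $\partial(L^d)$ identity.
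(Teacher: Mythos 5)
Your argument is correct. Note, however, that the paper does not actually prove this lemma: it simply records it as a consequence of \cite[Lemma 1.31]{iarrobino1999power}, so there is no in-text proof to compare against. What you have written is essentially the standard proof that appears in the cited sources. The forward direction via the contraction identity $\partial(L^d)=\tfrac{d!}{(d-k)!}\,\partial(a)\,L^{d-k}$ is exactly right, and your backward direction is the usual duality argument: the annihilator of $W=\langle L_1^d,\ldots,L_s^d\rangle$ in $T_d$ is $I(\mathbb{X})_d$ by the $k=d$ case of the identity, perfectness of the pairing $T_d\times S_d\to\mathbb{C}$ (which holds since $y^\beta(x^\gamma)=\beta!\,\delta_{\beta\gamma}$ in characteristic zero) gives $W=(I_d)^\perp$, and then $F\in W$ is equivalent to $I_d\subseteq(F^\perp)_d$, which the hypothesis supplies. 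Two points worth making explicit if you write this up: first, in the backward direction you use that $I$ is the \emph{full saturated} ideal of the points (so that $\operatorname{Ann}_{T_d}(W)$ is exactly $I_d$ and not something larger), which is indeed what the statement asserts; second, the phrase ``$\alpha_i\in\mathbb{C}$'' in the lemma allows zero coefficients, so the conclusion of the backward direction is precisely membership $F\in W$, as you have it. Your proof buys the reader a self-contained verification where the paper offers only a citation.
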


\subsection{Star configurations}

In this subsection we briefly recall some facts about star configuration set of points, see also \cite{starongenerichyper}.

\begin{definition}
Let $ l_1,\ldots, l_r $ be $ r $ linear forms in $ T $ such that any subset of $n+1$ is linearly independent. A star configuration set of points in $\mathbb{P}^n $ is the set of $ \binom{r}{n} $ points obtained by intersecting $ n $ of the hyperplanes $ \{l_i=0\}$ in all possible ways, that is
 $ \mathbb{X}(r) $ is the algebraic variety in $\mathbb{P}^n $ defined by the homogeneous ideal
$$J=\displaystyle\bigcap_{\substack{\tau=\{j_1,\ldots,j_{n} \}\subseteq[r]\\}} (l_{j_{1}},\ldots,l_{j_{n}}).$$
\end{definition}

Star configuration set of points behave like generic points from the point of view of Hilbert functions.

\begin{theorem}\rm{(\cite[Theorem 2.5]{starongenerichyper})}. \label{theoremin"n"}
Let $ \mathbb{X}(r)\subset\mathbb{P}^n $ be a star configuration of points. Then $ \mathbb{X}(r) $ has a generic Hilbert function, that is,
$$\operatorname{HF}( \mathbb{X}(r), t) = \dim \mathbb{C} (S/I({ \mathbb{X}(r)} ))_t = \min\left\lbrace\binom{n+t}{t},\binom{r}{n} \right\rbrace $$
Furthermore, the ideal $ I({\mathbb{X}(r)})  $ is generated by $ \binom{r}{n-1} $ forms of degree $ r-n+1 $.
\end{theorem}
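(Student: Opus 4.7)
The plan is to prove both claims simultaneously by induction, with base cases $r=n$ (a single point defined by $(l_1,\ldots,l_n)$, giving $n=\binom{n}{n-1}$ linear generators and constant Hilbert function $1$) and $n=1$ (where $\mathbb{X}(r)$ is $r$ points on $\mathbb{P}^1$, with ideal $(l_1\cdots l_r)$); both cases match every assertion directly.

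For the inductive step I would single out the hyperplane $H=\{l_r=0\}$ and partition $\mathbb{X}(r)$ as $\mathbb{X}(r-1)\sqcup\mathbb{X}'$, where $\mathbb{X}(r-1)$ (the points off $H$) is the star configuration from $l_1,\ldots,l_{r-1}$ in $\mathbb{P}^n$ and $\mathbb{X}'$ (the points on $H$) is a star configuration of $\binom{r-1}{n-1}$ points inside $H\cong\mathbb{P}^{n-1}$ coming from the restrictions $l_i|_H$. The standard residual short exact sequence
\[
0\longrightarrow I(\mathbb{X}(r-1))(-1)\xrightarrow{\;\cdot\,l_r\;}I(\mathbb{X}(r))\longrightarrow I_H(\mathbb{X}')\longrightarrow 0
\]
then yields $\operatorname{HF}(\mathbb{X}(r),t)=\operatorname{HF}(\mathbb{X}(r-1),t-1)+\operatorname{HF}(\mathbb{X}',t)$, where the last term is computed inside $H$. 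Substituting the inductive formula and applying the Pascal identities $\binom{n+t}{t}=\binom{n+t-1}{t-1}+\binom{n+t-1}{t}$ and $\binom{r}{n}=\binom{r-1}{n}+\binom{r-1}{n-1}$ recovers $\min\{\binom{n+t}{t},\binom{r}{n}\}$ after brief casework on which of the two minima is achieved in each summand.

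For the generation statement I would exhibit explicit generators: for each $(n-1)$-subset $\sigma\subseteq[r]$, set $F_\sigma=\prod_{i\notin\sigma}l_i$, of degree $r-n+1$. For any $n$-subset $\tau$ one has $|\tau|>|\sigma|$, so some $i\in\tau\setminus\sigma$ contributes a factor $l_i\mid F_\sigma$ that vanishes at $P_\tau$; hence $F_\sigma\in I(\mathbb{X}(r))$. There are exactly $\binom{r}{n-1}$ such forms, and they are linearly independent: on the line $L_\sigma=\bigcap_{i\in\sigma}\{l_i=0\}$, every $F_{\sigma'}$ with $\sigma'\neq\sigma$ vanishes identically (any $i\in\sigma\setminus\sigma'$ appears in the product defining $F_{\sigma'}$), while $F_\sigma$ does not. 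Since the Hilbert function computation forces $\dim I(\mathbb{X}(r))_{r-n+1}=\binom{r}{n-1}$ (a binomial identity to verify), the $F_\sigma$'s span the degree-$(r-n+1)$ piece of the ideal.

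The main obstacle is closing the generation argument in degrees $>r-n+1$: one must show that the $S$-module generated by the $F_\sigma$'s already exhausts $I(\mathbb{X}(r))$ and no further minimal generators appear. The natural strategy is to chase the residual exact sequence, lifting the inductively known generators of $I(\mathbb{X}(r-1))$ (multiplied by $l_r$) and of $I_H(\mathbb{X}')$ to $I(\mathbb{X}(r))$ and expressing them as $S$-combinations of the $F_\sigma$'s. This combinatorial bookkeeping, together with the casework needed for the Hilbert function formula, is the technical heart of the proof.
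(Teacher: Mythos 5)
First, a point of comparison: the paper does not prove this statement at all --- it is quoted with a citation to \cite[Theorem 2.5]{starongenerichyper}, so there is no in-paper argument to measure you against; you are reproving a known result. Your double induction on $(r,n)$ via the trace/residual decomposition with respect to $H=\{l_r=0\}$ is the standard route and its skeleton is sound: the splitting of $\mathbb{X}(r)$ into $\mathbb{X}(r-1)$ off $H$ and a star configuration of the $r-1$ restricted hyperplanes inside $H\cong\mathbb{P}^{n-1}$ is correct, the base cases check out, and the Pascal bookkeeping does return $\min\{\binom{n+t}{t},\binom{r}{n}\}$. One caveat: the residual sequence you write down is only left exact a priori as a sequence of graded (saturated) ideals; surjectivity of $I(\mathbb{X}(r))_t\to I_H(\mathbb{X}')_t$ is obstructed by $H^1(\mathcal{I}_{\mathbb{X}(r-1)}(t-1))$ and cannot simply be asserted. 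Fortunately the Hilbert function claim does not need it: left exactness alone gives $\operatorname{HF}(\mathbb{X}(r),t)\geq\operatorname{HF}(\mathbb{X}(r-1),t-1)+\operatorname{HF}(\mathbb{X}',t)$, which by induction equals $\min\left\{\binom{n+t}{t},\binom{r}{n}\right\}$, and the reverse inequality is the trivial bound $\operatorname{HF}(\mathbb{X}(r),t)\leq\min\left\{\binom{n+t}{t},\deg\mathbb{X}(r)\right\}$. Phrased that way, this part of your argument is complete.

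The genuine gap is the one you flag yourself: generation of the ideal in degrees above $r-n+1$. Your identification of the forms $F_\sigma$, their membership in $I(\mathbb{X}(r))$, their linear independence via restriction to the lines $L_\sigma$, and the count $\dim I(\mathbb{X}(r))_{r-n+1}=\binom{r+1}{n}-\binom{r}{n}=\binom{r}{n-1}$ are all correct, but ``chase the residual sequence'' is a plan, not a proof. The observation that closes it is that the Hilbert function computation retroactively supplies the missing exactness: once $h^0(\mathcal{I}_{\mathbb{X}(r)}(t))=h^0(\mathcal{I}_{\mathbb{X}(r-1)}(t-1))+h^0(\mathcal{I}_{H,\mathbb{X}'}(t))$ is known to hold in every degree $t$, the restriction map is surjective in every degree, so the sequence of graded modules is exact after all. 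Then the standard fact that in a short exact sequence of graded modules the middle term is generated by the generators of the sub together with lifts of generators of the quotient finishes the induction: $l_r$ times the inductively known generators of $I(\mathbb{X}(r-1))$ are exactly the $F_\sigma$ with $r\notin\sigma$, and the generators of $I_H(\mathbb{X}')$ lift to the $F_\sigma$ with $r\in\sigma$, for a total of $\binom{r-1}{n-1}+\binom{r-1}{n-2}=\binom{r}{n-1}$ generators of degree $r-n+1$. With that paragraph added your proposal becomes a complete proof; without it, the second assertion of the theorem remains unproved.
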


\subsection{A necessary condition} In this subsection we introduce a necessary condition for the existence of star configurations apolar to generic forms.

\begin{proposition}\label{prop1}
Let $ r\geq 3  $, $ d\geq 2 $ and $ n\geq 2 $ be integers. If $ F $ is a generic degree $ d $ form in $n+1$ variables such that there exists a star configuration $\mathbb{X}(r)$ apolar to $F$, then $ \rho(d,r,n)\geq0 $ where,
 $$ \rho(d,r,n)=\binom{r}{n}+nr-\binom{d+n}{d}. $$
\end{proposition}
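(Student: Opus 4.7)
The plan is a parameter count based on the Apolarity Lemma. By Lemma \ref{ourapolaritylemma}, the existence of a star configuration $\mathbb{X}(r)$ apolar to $F$ is equivalent to writing $F$ as a linear combination of $d$-th powers of the $N:=\binom{r}{n}$ linear forms dual to the points of $\mathbb{X}(r)$. So the set of forms admitting such a decomposition is precisely the image of a decomposition map whose domain dimension I will compare to the target dimension $\dim_\mathbb{C} S_d = \binom{d+n}{d}$.

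First I would set up the parameter space. Let $U\subseteq (\mathbb{P}^n)^r$ be the nonempty open subset parametrizing ordered $r$-tuples of hyperplanes $(H_1,\dots,H_r)$ such that any $n+1$ of the defining linear forms are linearly independent (so a star configuration is actually obtained). Each hyperplane contributes $n$ parameters, so $\dim U = nr$. To each $u = (H_1,\dots,H_r)\in U$ associate the star configuration $\mathbb{X}(r)_u$ and choose representatives $L_1(u),\dots,L_N(u)\in S_1$ for its $N=\binom{r}{n}$ points. Any rescaling of $L_i(u)$ only rescales $L_i(u)^d$ by a nonzero constant, which will be harmless in what follows.

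Next, define the decomposition map
\begin{equation*}
\Phi\colon U\times \mathbb{C}^N \longrightarrow S_d,\qquad \Phi\bigl(u,(\alpha_1,\dots,\alpha_N)\bigr)=\sum_{i=1}^N \alpha_i\, L_i(u)^d.
\end{equation*}
By the Apolarity Lemma, a form $F\in S_d$ admits an apolar star configuration $\mathbb{X}(r)$ if and only if $F$ lies in the image of $\Phi$ (the scaling ambiguity in the $L_i(u)$ is absorbed into the $\alpha_i$). Consequently, a \emph{generic} $F$ has an apolar star configuration $\mathbb{X}(r)$ exactly when the constructible set $\Phi(U\times \mathbb{C}^N)$ is dense in $S_d$.

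Finally, the dimension inequality necessary for dominance of $\Phi$ reads
\begin{equation*}
\dim(U\times \mathbb{C}^N) = nr + \binom{r}{n} \;\geq\; \binom{d+n}{d} = \dim_\mathbb{C} S_d,
\end{equation*}
which is exactly the condition $\rho(d,r,n)\geq 0$. The only subtle point worth double-checking is the count $\dim U = nr$ (a hyperplane in $\mathbb{P}^n$ is a point of the dual $\mathbb{P}^n$, so $n$ parameters each), and the fact that the open condition defining $U$ is nonempty so that $U$ really has dimension $nr$; both are immediate. No genuine obstacle arises: the argument is a clean incidence/dimension count, and the assumption $r\geq 3$, $d\geq 2$, $n\geq 2$ only serves to make the star configuration nondegenerate and the statement nontrivial.
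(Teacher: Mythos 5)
Your proof is correct and is essentially the same dimension count as the paper's: the paper packages the pairs $(\text{star configuration},\text{coefficients})$ into a projective incidence correspondence $\Sigma(d,r,n)\subseteq\mathcal{D}_r\times\mathbb{P}^{N_{d,n}}$ with two projections and bounds $\dim\Sigma\leq\dim H+nr$ using the Hilbert function of $\mathbb{X}(r)$, while you use the equivalent affine parametrization $\Phi\colon U\times\mathbb{C}^N\to S_d$ and compare $nr+\binom{r}{n}$ with $\binom{d+n}{d}$ directly. The only cosmetic difference is that you bypass the computation of $\dim H$ via Theorem \ref{theoremin"n"}, which is not needed for the necessary direction anyway.
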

\begin{proof}
We describe all star configurations $\mathbb{X}(r) $ in $ \mathbb{P}^n $. Let $ \check{\mathbb{P}}^n	 $ be the dual projective space of $ \mathbb{P}^n $ and let $ \ell_i\in\check{\mathbb{P}}^n$ be the corresponding hyperplane to $ 
l_i\in T_1 $. We consider the quasi-projective variety
\begin{center}
$\mathcal{D}_r\subseteq\underbrace{\check{\mathbb{P}}^n\times\ldots\times\check{\mathbb{P}}^n}_{r-times}=(\check{\mathbb{P}}^n)^r,$
\end{center}
where $ (\ell_1,\ldots,\ell_r)\in\mathcal{D}_r$ if and only if no $ n+1 $ of the hyperplanes $ \ell_i $ pass through the same point. Since $ \mathbb{P}^n\cong \mathbb{P}(S_1) $, it follows that any point $ p_i\in\mathbb{X}(r)\subset \mathbb{P}^n$ can be seen as the point $ [{L}_i]\in\mathbb{P}(S_1) $, $ L_i\in S_1 $ and so  $\mathbb{X}(r)=\{[{L}_1],\ldots,[{L}_{\binom{r}{n}}]\} $.  Let us consider the following Veronese map:
\begin{center}
\begin{tabular}{lcr}
$ \nu_d: $	& $ \mathbb{P}(S_1)\cong\mathbb{P}^n $ $ \longrightarrow  $ $ \mathbb{P}^{N_{d,n}}\cong\mathbb{P}(S_d) $,  &\qquad$  N_{d,n}=\binom{d+n}{d}-1. $\\
&$ [{L}_i] $ $ \longmapsto $ $ [{L}_i^d] $ &
\end{tabular}
\end{center}
Let $H$ be the projectivization of the linear span of the set $ \{\nu_d([{L}_1]),\ldots,\nu_d([{L}_{\binom{r}{n}}])\} $, that is, $H=\mathbb{P}(\langle [{L}_1^d],\ldots,[{L}_{\binom{r}{n}}^d]\rangle  )$.  
By Theorem \ref{theoremin"n"} we have that 
\begin{flalign*}
\operatorname{HF}(\mathbb{X}(r),d)=&\dim_{\mathbb{C}} \left(T/I({\mathbb{X}(r)})\right)_d\\
=&\dim_\mathbb{C}\langle[{L}_1^d],\ldots,[{L}_{\binom{r}{n}}^d]\rangle=\begin{cases}
\binom{r}{n}\qquad\quad  \forall\  d\geq r-n+1\\
\binom{d+n}{n}\qquad \forall\  d\leq r-n.
\end{cases}
\end{flalign*}
Therefore, $ \dim H=\min\left\{\binom{r}{n},\binom{d+n}{d}\right\}-1$. Define
$$\Psi:\mathcal{D}_r\longrightarrow \operatorname{Gr}\left(\mathbb{P}^{\dim H},\mathbb{P}^{N_{d,n}}\right), $$
which maps $ (\ell_1,\ldots,\ell_r) $ to $ \langle [{L}_1^d],\ldots,[{L}_{\binom{r}{n}}^d]\rangle  $. For a generic point $ [F]\in H $, we have that \[
F=\alpha_1{L}_1^d+\ldots+\alpha_{\binom{r}{n}}{L}_{\binom{r}{n}}^d
\]
and we define the following incidence correspondence:
$$\Sigma(d,r,n)=\{((\ell_1,\ldots,\ell_r),[F]):[F]\in \langle [{L}_1^d],\ldots,[{L}_{\binom{r}{n}}^d]\rangle \}\subseteq \mathcal{D}_r\times\mathbb{P}^{N_{d,n}}.  $$
We also consider the natural projection maps
\[
\pi_1:\Sigma(d,r,n)\longrightarrow \mathcal{D}_r \mbox{ and } \pi_2:\Sigma(d,r,n)\longrightarrow \mathbb{P}^{N_{d,n}}.
\]
Using a standard fiber dimension argument for a generic $ (\ell_1,\ldots,\ell_r)\in\mathcal{D}_r$,  follows that
$$
\dim(\Sigma(d,r,n))\leq\dim\pi^{-1}_1\left((\ell_1,\ldots,\ell_r)\right)+\dim\mathcal{D}_r=\dim H +nr.
$$
The map $ \pi_2 $ is dominant if and only if the generic degree $d$ form $ F $ in $n+1$ variable has an apolar $\mathbb{X}(r)$.
The map $ \pi_2 $ is dominant only if $ \dim(\Sigma(d,r,n))-\dim(\mathbb{P}^{N_{d,n}})\geq0 $ and this implies
\begin{align}\label{eqroh}
&\dim H +nr-N_{d,n}\geq 0.
\end{align}
It follows that for $ d\geq r-n+1 $, $$\rho(d,r,n)=\binom{r}{n}+nr-\binom{d+n}{d}\geq 0.$$
Note that for $ d\leq r-n $ 
$$\rho(d,r,n)\geq\binom{r}{n}+\binom{d+n}{d}-\binom{d+n}{d}> 0.$$ 

\end{proof}
It is useful to specialize the necessary condition in the case $n=2$.
\begin{corollary}\label{corn=2}
Consider the previous proposition. If $ n=2 $, then
$$\rho(d,r,2)=\dfrac{1}{2}(r(r-1)+4r-(d+2)(d+1)). $$
\end{corollary}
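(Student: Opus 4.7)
The statement is a direct specialization of the formula defined in Proposition \ref{prop1}, so the plan is simply to substitute $n=2$ into
\[
\rho(d,r,n) = \binom{r}{n} + nr - \binom{d+n}{d}
\]
and rewrite the binomial coefficients in closed form. Concretely, I would replace $\binom{r}{2}$ by $\tfrac{r(r-1)}{2}$, replace $nr$ by $2r = \tfrac{4r}{2}$, and replace $\binom{d+2}{2}$ by $\tfrac{(d+2)(d+1)}{2}$, then factor the common denominator $\tfrac{1}{2}$.

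There is no real obstacle here: the identity is a one-line algebraic manipulation, and no properties of star configurations, apolarity, or genericity beyond those already used to establish Proposition \ref{prop1} are needed. The only thing worth double-checking is the arithmetic, namely that $\binom{r}{2} + 2r = \tfrac{1}{2}(r(r-1) + 4r)$ and that $\binom{d+2}{d} = \binom{d+2}{2} = \tfrac{1}{2}(d+2)(d+1)$, both of which are immediate. The corollary then follows.
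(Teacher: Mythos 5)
Your proposal is correct and is exactly the (implicit) computation behind the corollary: substituting $n=2$ into $\rho(d,r,n)=\binom{r}{n}+nr-\binom{d+n}{d}$ and clearing the binomials over the common denominator $2$. The paper states the corollary without a written proof, and your one-line verification is the intended argument.
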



\subsection{A computational approach} \label{RJ}

It is possible to decide whether the generic degree $d$ form in $n+1$ variables has an apolar star configuration $\mathbb{X}(r)$ using a computational approach. However, the computational complexity is prohibitive, and this approach does effectively produce an answer only for small values of $d,r$, and $n$.

Let us recall the natural projection map $ \pi_2:\Sigma(d,r,n)\longrightarrow \mathbb{P}^{N_{d,n}} $ from Proposition \ref{prop1}. Let $ d\geq 3 $ and $ n\geq 2 $ be integers. The closure of the image of $ \pi_2 $ is the closure of the union of the linear spans of all possible $ \mathbb{X}(r)\subset\mathbb{P}^n $, and we denote it by
$$\mathcal{U}(d,r,n):=\overline{\operatorname{Im}\pi_2}. $$
We only consider  $ (d,r,n) $ such that $ \rho(d,r,n)\geq0 $ because of Proposition \ref{prop1}. To compute $\dim\mathcal{U}(d,r,n)$, it is enough to find the dimension of the tangent space to $ \operatorname{Im}\pi_2$ at a generic point $ p$.
$$\overline{\operatorname{Im}\pi_2}=\overline{\displaystyle\bigcup_{\mathbb{X}(r)\subset\mathbb{P}^n}\langle [{L}_1^d],\ldots,[{L}_{\binom{r}{n}}^d]\rangle }.$$
In order to compute algorithmically the dimension of the tangent space, we proceed as follows.

We construct $ r $ linear forms $ {l}_1,\ldots,{l}_r $ using $ (n+1)r $ variables,
$${l}_1=a_{0,1}y_0+a_{1,1}y_1+\cdots +a_{n,1}y_n,\ldots, {l}_r=a_{0,r}y_0+a_{1,r}y_1+\cdots +a_{n,r}y_n.$$
Let $ \mathbb{X}(r)=\{p_1,p_2,\dots,p_{\binom{r}{n}} \} $ be the set of points that is obtained by constructing the star configuration  $ \mathbb{X}(r) $ using $ {l}_1,\dots,{l}_r $. Note that any $p_i=[b_{0,i},b_{1,i},\cdots,b_{n,i}] $ for $ i=1,\ldots,\binom{r}{n} $ is such that 
$$b_{j,i}=f_{j,i}(a_{0,1},\ldots,a_{0,r};\ldots;\stackon[-8pt]{$ a_{j,1},\ldots,a_{j,r} $}{\vstretch{1.5}{\hstretch{2.4}{\widehat{\phantom{\;\;\;\;\;\;\;\;}}}}};\ldots;a_{n,1},\ldots,a_{n,r}),\ \ j=0,\ldots,n$$
where $ f_{j,i} $ is a polynomial and $ \stackon[-8pt]{$a_{j,1},\ldots,a_{j,r}$}{\vstretch{1.5}{\hstretch{2.4}{\widehat{\phantom{\;\;\;\;\;\;\;\;}}}}} $ means that the variables $a_{j,1},\ldots,a_{j,r}$ do not appear in $ b_{j,i} $. For a pair $ ((\ell_1,\ldots,\ell_r),[F])\in\Sigma(d,r,n) $, $ F $ is a form of degree $ d $ with $ m $ variables where $m=(n+1)r+\binom{r}{n}  $ such that 
$$
F=\alpha_1{L}_1^d+\cdots+\alpha_{\binom{r}{n}}{L}_{\binom{r}{n}}^d;\qquad {L}_i=b_{0,i}x_0+b_{1,i}x_1+\cdots+b_{n,i}x_n. $$
Let $ g_i:=\operatorname{coeff}_{m_i}(F) $, where $ m_i $ is the $ i$-th element of the standard monomial basis of $ S_d $ respect to the $ lexicographic $ order, for $ i=1,\ldots,\binom{d+n}{d} $. We define  the map
$$ \Gamma:\mathbb{A}^{m}\longrightarrow \mathbb{A}^{N_{d,n}+1}$$
which maps every $ F $  to  $(g_1,g_1,\ldots,g_{N_{d,n}+1}) $. Then we compute the $rank $ of the Jacobian matrix $m\times(N_{d,n}+1)  $ of the map evaluated at a generic point $ p$.
Recalling that
$$\dim\mathcal{U}(d,r,n)=\dim\overline{\operatorname{Im}\pi_2}=\operatorname{rank}(\operatorname{Jac}\Gamma)_p-1,$$
we can use this computational approach to address our question, namely

\begin{lemma}\label{jaclemma}
The generic degree $d$ form in $n+1$ variables has an apolar $\mathbb{X}(r)$ if and only if
\[
\operatorname{rank}(\operatorname{Jac}\Gamma)_p={n+d \choose d}
\]
for some choice of the parameters $p$.
\end{lemma}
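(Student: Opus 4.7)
The plan is to translate the geometric condition that $\pi_{2}$ be dominant into the rank of an explicit Jacobian matrix, via two steps: (i) identify $\overline{\operatorname{Im}\Gamma}$ with the affine cone over $\mathcal{U}(d,r,n)\subseteq \mathbb{P}^{N_{d,n}}$; (ii) apply the standard fact that the dimension of the closure of the image of an algebraic morphism can be read off from the rank of the Jacobian at a sufficiently general point.

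First, I would observe that by construction $\Gamma$ sends a point $p=(a_{j,i},\alpha_i)\in\mathbb{A}^m$ to the coefficient vector of the form
\[
F \;=\; \alpha_1 L_1^{d}+\cdots+\alpha_{\binom{r}{n}} L_{\binom{r}{n}}^{d},
\]
where the $L_i$'s are computed from the $l_1,\ldots,l_r$ exactly as in the construction of $\mathbb{X}(r)$. Thus the image of $\Gamma$ is precisely the affine cone over the union
$\bigcup_{\mathbb{X}(r)}\langle [L_1^d],\ldots,[L_{\binom{r}{n}}^d]\rangle$
appearing in the definition of $\mathcal{U}(d,r,n)=\overline{\operatorname{Im}\pi_2}$ (restricted to the open locus $\mathcal{D}_r$ of valid configurations), and hence
\[
\dim \overline{\operatorname{Im}\Gamma} \;=\; \dim \mathcal{U}(d,r,n)+1.
\]

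Second, since $\Gamma$ is a polynomial map between affine varieties, the rank of its Jacobian $(\operatorname{Jac}\Gamma)_p$ is upper semicontinuous in $p$, and its generic value equals $\dim\overline{\operatorname{Im}\Gamma}$. Consequently, if $\operatorname{rank}(\operatorname{Jac}\Gamma)_p=\binom{n+d}{d}$ at some point $p$, then the same equality holds on a Zariski open subset of $\mathbb{A}^m$, so the generic rank equals $\binom{n+d}{d}$. Since the target has dimension $N_{d,n}+1=\binom{n+d}{d}$, this forces $\overline{\operatorname{Im}\Gamma}=\mathbb{A}^{N_{d,n}+1}$, equivalently $\mathcal{U}(d,r,n)=\mathbb{P}^{N_{d,n}}$, i.e.\ $\pi_2$ dominant. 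Conversely, if $\pi_2$ is dominant then $\dim\overline{\operatorname{Im}\Gamma}=\binom{n+d}{d}$ and the generic Jacobian rank must equal $\binom{n+d}{d}$, producing the required $p$.

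Finally, I would close the loop by invoking the setup of Proposition \ref{prop1}: dominance of $\pi_2$ is equivalent to the statement that a general $[F]\in\mathbb{P}^{N_{d,n}}$ lies in the span $\langle [L_1^d],\ldots,[L_{\binom{r}{n}}^d]\rangle$ for some star configuration $\mathbb{X}(r)=\{[L_1],\ldots,[L_{\binom{r}{n}}]\}$, which by the Apolarity Lemma (Lemma \ref{ourapolaritylemma}) is exactly the existence of a star configuration apolar to the generic $F$. The only non-routine step is checking that $\overline{\operatorname{Im}\Gamma}$ really is the affine cone over $\mathcal{U}(d,r,n)$ (one must be careful that the parameterization of points $p_i$ through the polynomials $f_{j,i}$ produces every star configuration without artificially restricting the family), but this follows from the fact that $\mathcal{D}_r$ is irreducible and $\Gamma$ is defined on a dense open subset of $\mathbb{A}^m$ mapping surjectively onto $\mathcal{D}_r$ up to scaling of each $l_i$ and $\alpha_i$.
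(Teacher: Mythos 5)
Your argument is correct and follows essentially the same route the paper takes in Subsection \ref{RJ}: identify $\overline{\operatorname{Im}\Gamma}$ with the affine cone over $\mathcal{U}(d,r,n)=\overline{\operatorname{Im}\pi_2}$, read its dimension off the generic Jacobian rank, and translate dominance of $\pi_2$ into the existence of an apolar $\mathbb{X}(r)$ via the Apolarity Lemma. One cosmetic point: the rank of $(\operatorname{Jac}\Gamma)_p$ is \emph{lower} semicontinuous in $p$ (the locus where it is $\geq k$ is open), not upper; your conclusion is unaffected, since $\binom{n+d}{d}$ is the maximal possible rank and attaining it at one point therefore forces it generically.
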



\section{Main results}

In this section we present our main results about the question: for what 3-ples $(d,r,n)$ does the generic degree $d$ form in $n+1$ variables have an apolar star configuration $\mathbb{X}(r)$?

\begin{lemma}\label{sufficient_condition_from_ideal}
If $r\geq d+n$, then the generic degree $d$ form in $n+1$ variables has an apolar star configuration $\mathbb{X}(r)$.
\end{lemma}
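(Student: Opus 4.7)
The plan is to verify the hypothesis of the Apolarity Lemma directly, by constructing an ideal of points inside $F^\perp$ for every star configuration with $r \geq d+n$. Because the numerical inequality is forcing the generators of $I(\mathbb{X}(r))$ into a degree range where $F^\perp$ automatically fills everything, no genericity of $F$ needs to be used beyond what the Apolarity Lemma requires.

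Concretely, first I would invoke Theorem \ref{theoremin"n"} to record that $I(\mathbb{X}(r))$ is generated by forms of degree $r-n+1$. The hypothesis $r \geq d+n$ rewrites as $r-n+1 \geq d+1$, so every minimal generator of $I(\mathbb{X}(r))$ lies in degree strictly greater than $d$.

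Second, I would observe that for any degree $d$ form $F \in S$, the apolar ideal $F^\perp \subseteq T$ satisfies $(F^\perp)_k = T_k$ for every $k > d$. Indeed, an element $\partial \in T_k$ with $k > d$ acts on $F$ as a differential operator of order exceeding the degree of $F$, so $\partial F = 0$ automatically. Combining this with the previous step yields $I(\mathbb{X}(r))_k \subseteq T_k = (F^\perp)_k$ for all $k \geq r-n+1$, while in lower degrees $I(\mathbb{X}(r))_k = 0$. Hence $I(\mathbb{X}(r)) \subseteq F^\perp$ for \emph{every} star configuration $\mathbb{X}(r)$ in $\mathbb{P}^n$, regardless of which generic $F$ we start with.

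Finally, I would apply Lemma \ref{ourapolaritylemma}: since $I(\mathbb{X}(r))$ is the saturated radical ideal of the $\binom{r}{n}$ distinct points of $\mathbb{X}(r)$ and is contained in $F^\perp$, the form $F$ admits a decomposition $F = \sum_{i=1}^{\binom{r}{n}} \alpha_i L_i^d$ with the $[L_i]$ being precisely the points of $\mathbb{X}(r)$. Thus the generic (in fact every) degree $d$ form in $n+1$ variables has an apolar star configuration $\mathbb{X}(r)$. There is no real obstacle in this argument; the only thing to double-check is that $I(\mathbb{X}(r))$ really is generated in a single degree $r-n+1$, which is exactly what Theorem \ref{theoremin"n"} supplies.
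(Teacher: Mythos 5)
Your proof is correct and follows essentially the same route as the paper: the generators of $I(\mathbb{X}(r))$ sit in degree $r-n+1\geq d+1$, where $F^\perp$ already equals all of $T$, so the inclusion $I(\mathbb{X}(r))\subseteq F^\perp$ is automatic and the Apolarity Lemma finishes the argument. Your added observation that genericity of $F$ is not actually needed here is accurate, since $(F^\perp)_k=T_k$ for $k>d$ holds for every nonzero degree $d$ form.
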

\begin{proof}
If $F$ is a generic degree $d$ form in $n+1$ variables, then $(F^\perp)_j=T_j$ for $j\geq d+1$. By  Theorem \ref{theoremin"n"} the ideal of a star configuration $\mathbb{X}(r)$ starts in degree $r-n+1$ and the conclusion follows.
\end{proof}

In the case of quadrics, i.e. $d=2$, we can immediately give a complete answer:

\begin{lemma}
The generic quadratic form in $n+1$ variables has an apolar star configuration $\mathbb{X}(r)$ if and only if $r\geq n+1$.
\end{lemma}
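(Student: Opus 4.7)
The plan is to split into three regimes for $r$. The easiest is $r \ge n+2$: since $d = 2$, the hypothesis $r \ge d+n$ of Lemma \ref{sufficient_condition_from_ideal} is satisfied, and an apolar star configuration exists. Similarly, the case $r \le n$ is ruled out by a rank obstruction: the star configuration has $\binom{r}{n} \le 1$ points, so by the Apolarity Lemma any apolar $\mathbb{X}(r)$ would force $F$ to be a scalar multiple of $L^2$ for a single linear form $L$; this contradicts the fact that the generic quadratic form in $n+1$ variables has Waring rank $n+1 \ge 3$ (equal to the rank of its symmetric coefficient matrix, which is nondegenerate for generic $F$).

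The substantive case is the boundary $r = n+1$. I would diagonalize the generic (nondegenerate) quadric as $F = \alpha_1 L_1^2 + \cdots + \alpha_{n+1} L_{n+1}^2$ with $L_1,\dots,L_{n+1}$ linearly independent linear forms. The Apolarity Lemma then shows that $\mathbb{X} = \{[L_1],\dots,[L_{n+1}]\} \subset \mathbb{P}^n$ is apolar to $F$. The remaining step is to recognize $\mathbb{X}$ as a star configuration $\mathbb{X}(n+1)$: after a linear change of coordinates one may take $[L_i]$ to be the $i$-th coordinate point, and then the $n+1$ coordinate hyperplanes are in general linear position and recover $\mathbb{X}$ under their $n$-wise intersection.

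The main obstacle, such as it is, lies in the geometric observation just invoked: any $n+1$ points in general linear position in $\mathbb{P}^n$ form a star configuration $\mathbb{X}(n+1)$. This is an elementary fact once written in coordinates, but it is special to the boundary value $r = n+1$ and is precisely what closes the one-unit gap left by Lemma \ref{sufficient_condition_from_ideal}, whose hypothesis reads $r \ge d+n = n+2$ when $d=2$.
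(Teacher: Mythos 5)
Your proposal is correct and follows essentially the same route as the paper: Lemma \ref{sufficient_condition_from_ideal} for $r\geq n+2$, and for $r=n+1$ the observation that the generic quadric has rank $n+1$ realized by $n+1$ linearly independent linear forms, whose corresponding points in general position are exactly a star configuration $\mathbb{X}(n+1)$. The only cosmetic difference is in the range $r\leq n$, where the paper simply notes that no star configuration $\mathbb{X}(r)$ exists, while you use a rank obstruction; both dispose of that case immediately.
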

\begin{proof}
Using Lemma \ref{sufficient_condition_from_ideal} we only need to consider $r\leq n+1$. If $r<n+1$, there is no star configuration  $\mathbb{X}(r)$ and thus the result follows for $r<n+1$. If $r=n+1$, the result follows since $\mathbb{X}(n+1)$ consists of $n+1$ points in general position and the rank of the generic quadratic form is $n+1$.
\end{proof}

Since the degree two case is completely solved, we now consider the $d\geq 3$. We first consider the case $n\geq 6$ for which we have a very uniform solution.

\begin{theorem}
Let $ d\geq 3 $ and $ n\geq 6 $ be integers. If $ r< d+n $, then there is no star configuration $ \mathbb{X}(r) $ apolar to a generic form of degree $ d $. If $ r\geq d+n $, then there exists a star configuration $ \mathbb{X}(r) $ apolar to a generic form of degree $ d $.
\end{theorem}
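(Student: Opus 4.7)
The plan is to split on the sign of $d+n-r$. When $r \geq d+n$, the existence of an apolar star configuration follows immediately from Lemma~\ref{sufficient_condition_from_ideal}, so the substance of the theorem is the non-existence direction when $r < d+n$. My strategy for that direction is to show that the necessary inequality $\rho(d,r,n) \geq 0$ of Proposition~\ref{prop1} actually fails throughout this range. The sub-case $r < n$ needs no argument, since then no star configuration $\mathbb{X}(r)$ exists in $\mathbb{P}^n$ at all; so the real work is confined to the range $n \leq r \leq d+n-1$.

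First I would observe that $\rho$ is strictly increasing in $r$:
\[
\rho(d,r+1,n) - \rho(d,r,n) = \binom{r}{n-1} + n > 0,
\]
so it suffices to verify $\rho(d,d+n-1,n) < 0$. Using Pascal's identity $\binom{d+n}{n} = \binom{d+n-1}{n} + \binom{d+n-1}{n-1}$ and then dividing through by $d+n-1$, this reduces to the clean combinatorial bound
\[
\binom{d+n-2}{d} > n(n-1).
\]

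The main step, and essentially the only nontrivial ingredient, is to verify this last inequality under the hypotheses $d \geq 3$ and $n \geq 6$. Because the ratio $\binom{d+n-2}{d}/\binom{d+n-3}{d-1} = (d+n-2)/d$ exceeds $1$ for $n \geq 3$, the quantity $\binom{d+n-2}{d}$ is monotone increasing in $d$, so it is enough to treat the base case $d=3$. There the inequality reads $\binom{n+1}{3} > n(n-1)$, i.e.\ $\tfrac{n(n-1)(n+1)}{6} > n(n-1)$, which simplifies to $n+1 > 6$. This is exactly the hypothesis $n \geq 6$, so the argument is sharp rather than wasteful, and the appearance of the threshold $n \geq 6$ in the theorem statement is explained transparently by this one-line numerical check. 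I do not anticipate any hidden obstacle.
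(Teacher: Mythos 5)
Your proof is correct and follows essentially the same route as the paper: existence for $r \geq d+n$ via Lemma~\ref{sufficient_condition_from_ideal}, and non-existence for $r < d+n$ by reducing to $\rho(d,d+n-1,n) < 0$ (your monotonicity of $\rho$ in $r$ is the paper's bounds $\binom{r}{n} \leq \binom{d+n-1}{n}$ and $nr \leq n(d+n-1)$ in disguise) and then checking a single binomial inequality. The only cosmetic difference is that you write that inequality as $\binom{d+n-2}{d} > n(n-1)$ and minimize over $d$ at $d=3$, whereas the paper writes the equivalent form $\binom{d+n-2}{d-2} > d(d-1)$ and bounds it from below using $n\geq 6$; your version has the small advantage of making the threshold $n \geq 6$ visibly sharp (at $d=3$, $n=5$ one gets equality, matching the exceptional triple $(3,7,5)$ of Theorem~\ref{n=3,4,5}).
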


\begin{proof}
Because of Lemma \ref{sufficient_condition_from_ideal} we only need to consider $r<d+n$.

If  $ r< d+n $, we claim that $ \rho(d,r,n)<0 $, and then, by Proposition \ref{prop1}, there is no star configuration $ \mathbb{X}(r) $ apolar to a generic form of degree $ d $.

\begin{proof} [Claim]

For $ r\leq d+n-1 $, we have that $ \binom{r}{n}\leq\binom{d+n-1}{n} $ and $ nr\leq n(d+n-1) $. Thus,
\begin{align*}
\rho(d,r,n)&={r\choose n}+nr-{d+n\choose d} \\
&\leq{d+n-1\choose n}+n( d+n-1) -{d+n\choose d}\\
&=n( d+n-1)-\binom{d + n - 1}{d}\\
&= n( d+n-1)-\frac{1}{d!} (d + n - 1) \cdots (n + 1)n\\
&= n( d+n-1)-\frac{1}{d(d - 1)}\binom{d + n - 2}{d - 2}n(d + n - 1)\\
&= n( d+n-1)\left(1-\frac{1}{d(d - 1)}\binom{d + n - 2}{d - 2} \right). \\
\end{align*}
Since $ n( d+n-1)>0 $, then it suffices to prove that $
\binom{d + n - 2}{d - 2} > d(d - 1),
$
which is true because
\begin{align*}
\binom{d + n - 2}{d - 2} &> \binom{d + 5 - 2}{d - 2} = \frac{1}{5!} (d + 3)(d + 2)(d + 1)d(d - 1)\\
&\geqslant \frac{1}{5!} (3 + 3)(3 + 2)(3 + 1)d(d - 1) = d(d - 1).
\end{align*}
Hence, for $ r< d+n $ there is no $ \mathbb{X}(r) $ apolar to the generic form of degree $ d $. The claim is now proved.
\end{proof}
The proof is now completed.
\end{proof}

We now consider the cases $n=3,4,5$.

\begin{theorem}\label{n=3,4,5}
Let $ d\geq 3 $ be an integer and $ n=3,4,5 $. Let $ F $ be a generic form of degree $ d $ in $n+1$ variables. If $ r\geq d+n $, then there exists a star configuration $ \mathbb{X}(r) $  apolar to $ F $. If $ r< d+n $, then there does not exist a star configuration $ \mathbb{X}(r) $  apolar to $ F $ unless the 3-ple $ (d,r,n) $ is one of the following cases in which we have existence: $$(3,5,3),(4,6,3),(5,7,3),(3,6,4), \text{or}\  (3,7,5).$$
\end{theorem}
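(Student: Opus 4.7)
The forward implication that $r \geq d+n$ yields existence is immediate from Lemma~\ref{sufficient_condition_from_ideal}. For the converse (classifying when existence holds in the range $r < d+n$), the plan is to combine the necessary condition of Proposition~\ref{prop1} (which demands $\rho(d,r,n) \geq 0$) with the sufficient computational criterion of Lemma~\ref{jaclemma}.

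The first reduction exploits monotonicity. Because
$$\rho(d,r,n) - \rho(d,r-1,n) = \binom{r-1}{n-1} + n > 0,$$
the function $\rho(d,r,n)$ is strictly increasing in $r$, so it is enough to evaluate at $r = d+n-1$. Using the identity $\binom{d+n}{n} - \binom{d+n-1}{n} = \binom{d+n-1}{n-1}$ one obtains
$$\rho(d,\,d+n-1,\,n) = n(d+n-1) - \binom{d+n-1}{n-1}.$$
An elementary inequality argument, in the same spirit as the one proving the $n \geq 6$ theorem, shows that this quantity is strictly negative exactly in the ranges $n=3,\,d\geq 6$; $n=4,\,d\geq 4$; and $n=5,\,d\geq 4$. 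In each of these ranges $\rho(d,r,n) < 0$ for every $r<d+n$ by monotonicity, so Proposition~\ref{prop1} rules out any apolar $\mathbb{X}(r)$.

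The finitely many leftover pairs are $(d,n) \in \{(3,3),(4,3),(5,3),(3,4),(3,5)\}$. For each of these I would tabulate $\rho(d,r,n)$ over the relevant values of $r < d+n$. A direct calculation reveals that $\rho(d,r,n) \geq 0$ occurs exactly at the five triples $(3,5,3)$, $(4,6,3)$, $(5,7,3)$, $(3,6,4)$, $(3,7,5)$. At every other $(d,r,n)$ appearing in this finite table $\rho < 0$, so Proposition~\ref{prop1} again gives non-existence.

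What remains is to prove \emph{existence} at each of the five exceptional triples. Here the plan is to apply Lemma~\ref{jaclemma} case by case: parametrize the $r$ linear forms $\ell_1,\ldots,\ell_r$, express the $\binom{r}{n}$ points of $\mathbb{X}(r)$ as explicit rational functions of the coefficients, form the coefficient map $\Gamma$ of $F = \sum \alpha_i L_i^d$ with respect to the monomial basis of $S_d$, and verify that the Jacobian of $\Gamma$ attains rank $\binom{d+n}{d}$ at a single numerical specialization $p$ of the parameters. Whenever this rank is full, Lemma~\ref{jaclemma} certifies existence for the generic degree $d$ form. This is the main obstacle: for the largest exception $(5,7,3)$ one handles a Jacobian with $(n+1)r + \binom{r}{n} = 63$ rows and target dimension $\binom{d+n}{d} = 56$, so although the content is clear in principle, verification depends on a successful computer-algebra calculation, most cleanly carried out by specializing the parameters to a prime field.
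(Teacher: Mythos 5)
Your proposal is correct and follows essentially the same route as the paper: rule out $r<d+n$ via the necessary condition $\rho(d,r,n)\geq 0$ of Proposition~\ref{prop1}, isolate the five exceptional triples, and certify existence there by the Jacobian rank criterion of Lemma~\ref{jaclemma}. The only (minor, and arguably cleaner) difference is that you reduce to the single boundary case $r=d+n-1$ using the monotonicity $\rho(d,r,n)-\rho(d,r-1,n)=\binom{r-1}{n-1}+n>0$, whereas the paper treats $r=d+n-1$ and $r\leq d+n-2$ as two separate estimates.
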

\begin{proof}
By Lemma \ref{sufficient_condition_from_ideal}, we conclude that for any $ r\geq d+n $ there exists a star configuration  $ \mathbb{X}(r) $ apolar to $ F $. Now, assume that $ r<d+n $ and consider the following cases:\\
\textbf{(a)} If $ r=d+n-1 $, then
\begin{align*}
\rho(d,d+n-1,n)&={d+n-1\choose n}+n( d+n-1) -{d+n\choose d}\\
&=n( d+n-1)-\binom{d + n - 1}{d},
\end{align*}
and we have the following cases:
\begin{itemize}
	\item[\textbf{(1)}] case $ n=3 $
	$$\rho(d,d+2,3)=3( d+2)-\binom{d + 2}{d}=(d+2)(5-d)/2.$$
	Therefore, for $ d\geq 3 $ we have $ (d+2)(5-d)/2<0 $ unless $ d=3,4,5 $.
	\item[\textbf{(2)}] case $ n=4 $
\begin{align*}
\rho(d,d+3,4)=4( d+3)-\binom{d +3}{d}=\left( (d+6)(3-d)+4\right)(d+3)/6.
\end{align*}
	Hence, for all $ d\geq3 $,  $ \rho(d,d+3,4)<0 $  unless $ d=3 $.
	\item[\textbf{(3)}] case $ n=5 $
	$$\rho(d,d+4,5)=5( d+4)-\binom{d +4}{d}=(d+4)(3-d)(d^2+9d+38)/24.$$
	We conclude that $ \rho(d,d+4,5)<0 $ for all $ d\geq3 $ unless $ d=3 $.
\end{itemize}
\textbf{(b)} If $ r\leq d+n-2 $, then we have $ \binom{r}{n}\leq\binom{d+n-2}{n} $ and $ nr\leq n(d+n-2) $. Hence,
\begin{align*}
\rho(d,r,n)={r\choose n}+nr-{d+n\choose d} \leq{d+n-2\choose n}+n(d+n-2)-{d+n\choose d}.
\end{align*}
As in part \textbf{(a)}, we consider the following cases:
\begin{itemize}
\item[\textbf{(1)}] case $ n=3 $
$$ \rho(d,r,3)\leq{d+1\choose 3}+3(d+1)-{d+3\choose d}=-(d+1)(d-2).$$ It is obvious to see that $ -(d+1)(d-2)<0 $ for any  $d\geq3 $ .
\item[\textbf{(2)}] case $ n=4 $
\begin{align*}
 \rho(d,r,4)&\leq{d+2\choose 4}+4(d+2)-{d+4\choose d}
 =-(d+2)(2d^2+5d-21)/6\\
 &\leq-2(d+2).
\end{align*}
So, for all $ d\geq3 $ we have that $ -2(d+2)<0 $.
\item[\textbf{(3)}] case $ n=5 $
\begin{align*}
\rho(d,r,5)&\leq{d+3\choose 5}+5(d+3)-{d+5\choose d}=(d+3)(d^3+5d^2+8d-56)/{6}\\
&\leq-10(d+3)/3.
\end{align*}
 Therefore, $ -10(d+3)/3<0 $ for all $ d\geq3 $.
\end{itemize}
Hence, for $ r< d+n $ the necessary condition is not satisfied, $ \rho(d,r,n)<0 $, except for five 3-ples which have appeared in \textbf{(a)(1)}, \textbf{(a)(2)}, and \textbf{(a)(3)}.
 So, to complete the proof we only need to prove that in the above five cases we have existence. By the strategy in Subsection \ref{RJ}, if we show that  $ \dim\mathcal{U}(d,r,n)=N_{d,n} $ for the above cases, then the proof is completed and this is done computationally using \verb|Macaulay2|, \cite{M2}.
\end{proof}

We now conclude this section with the $n=2$ case in which we have a complete solution for all 3-ples not of the form $(d,d+1,2)$.

\begin{proposition}
Let $F$ be a generic degree $d$ form in three variables. If $r\geq d+2$, then there exists a star configuration $\mathbb{X}(r)$ apolar to $F$.  If $r\leq d$, then there does not exists a star configuration $\mathbb{X}(r)$ apolar to $F$.
\end{proposition}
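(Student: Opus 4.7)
The plan is to treat the two halves of the statement separately, with each one reducing to a tool already set up in Section~2.

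For the existence half ($r \geq d+2$), I would invoke Lemma~\ref{sufficient_condition_from_ideal} directly. With $n=2$ the threshold $r \geq d+n$ becomes exactly $r \geq d+2$, and the lemma's conclusion provides an apolar star configuration without further work.

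For the non-existence half ($r \leq d$), the strategy is to show that the necessary condition from Proposition~\ref{prop1} fails, i.e.\ $\rho(d,r,2) < 0$. Using the explicit specialization in Corollary~\ref{corn=2},
\[
\rho(d,r,2) = \frac{r^2 + 3r - (d+1)(d+2)}{2}.
\]
Since $f(r)=r^2+3r$ is strictly increasing on $r\geq 0$, the hypothesis $r \leq d$ yields
\[
r^2 + 3r \;\leq\; d^2 + 3d \;<\; d^2 + 3d + 2 \;=\; (d+1)(d+2),
\]
so $\rho(d,r,2) \leq -1 < 0$. Proposition~\ref{prop1} (which requires $r \geq 3$) then rules out an apolar $\mathbb{X}(r)$.

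The only case outside the range of Proposition~\ref{prop1} is $r=2$, where $\mathbb{X}(2)$ is a single point. An apolar set of cardinality one would force $F = \alpha L^d$ for some $L\in S_1$, contradicting genericity of $F$ whenever $d \geq 2$. This handles the edge case.

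There is no real obstacle: the argument is essentially arithmetic once Corollary~\ref{corn=2} is in hand, and the only subtlety is remembering to treat $r=2$ outside the scope of Proposition~\ref{prop1}. The proposition leaves open precisely the intermediate value $r=d+1$, which is consistent with the paper's stated gap.
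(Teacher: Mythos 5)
Your proposal is correct and follows essentially the same route as the paper: Lemma~\ref{sufficient_condition_from_ideal} handles $r\geq d+2$, and for $r\leq d$ the paper likewise applies Corollary~\ref{corn=2}, writing $\rho(d,r,2)=\tfrac{1}{2}(r-d)(3+r+d)-1<0$, which is the same inequality you derive. Your explicit treatment of the edge case $r=2$ (outside the hypothesis $r\geq 3$ of Proposition~\ref{prop1}) is a small point of extra care that the paper's proof glosses over.
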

\begin{proof}
The case $r\geq d+2$ is proved using Lemma \ref{sufficient_condition_from_ideal}. The case  $ r-d\leq 0 $, is proved using Proposition \ref{prop1}. In fact, by Corollary \ref{corn=2}, we have
\[
\rho(d,r,2)=\dfrac{1}{2}\left(r(r-1)+4r-(d+2)(d+1)\right)=\dfrac{1}{2}(r-d)(3+r+d)-1<0,
\]
and hence we conclude that there is no star configuration $\mathbb{X}(r) $ apolar to $F$.
\end{proof}

The cases $(d,d+1,2)$ can be treated computationally for small values of $d$ using Lemma \ref{jaclemma} showing that, for $d\leq 13$, there exists a star configuration $\mathbb{X}(d+1)$ apolar to the generic degree $d$ ternary form. This leads to the following conjecture:

\begin{conjecture}\label{conj}
Let $ d\geq 3 $ be an integer. For a generic ternary degree $ d $ form  $ F  $ there exists a star configuration $ \mathbb{X}(d+1) $ apolar to it.
\end{conjecture}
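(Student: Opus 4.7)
The strategy is to promote the computational verification of Lemma \ref{jaclemma} for small $d$ into a proof valid for all $d\geq 3$, by reducing the Jacobian criterion to a rank statement that can be checked at a carefully chosen symmetric specialization. By Theorem \ref{theoremin"n"}, the ideal $I(\mathbb{X}(d+1))$ is generated in degree $d$ by the $d+1$ products $F_i=\prod_{j\neq i}l_j\in T_d$, and these are linearly independent whenever the $l_j$ are in general position (since $F_i$ does not vanish on the line $l_i=0$, while $F_j$ for $j\neq i$ does). Via the Apolarity Lemma the conjecture recasts as: for generic $F\in S_d$ there exist lines $l_1,\dots,l_{d+1}$ in general position with $F_i\cdot F=0$ for every $i$. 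These cut out the incidence variety $\Sigma(d,d+1,2)\subset(\check{\mathbb{P}}^2)^{d+1}\times\mathbb{P}(S_d)$; by Corollary \ref{corn=2}, $\rho(d,d+1,2)=d+1\geq 0$, so dominance of $\pi_2$ is numerically admissible with expected fiber dimension $d+1$.

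I would then analyse the differential $d\pi_2$ at a single smooth point of $\Sigma$. Linearising $F_iF=0$ at $((l_i),F)\in\Sigma$ yields, for each $i$,
\[
\Bigl\langle\sum_{j\neq i}(F_i/l_j)\,\dot l_j,\,F\Bigr\rangle\;+\;\langle F_i,\dot F\rangle\;=\;0.
\]
Since the $F_i$ are linearly independent in $T_d$, the map $\dot F\mapsto(\langle F_i,\dot F\rangle)_i$ is already surjective onto $\mathbb{C}^{d+1}$; so $d\pi_2$ is surjective onto $T_F\mathbb{P}(S_d)$ if and only if the linear map
\[
A:\bigoplus_{i=1}^{d+1}T_1/\langle l_i\rangle\;\longrightarrow\;\mathbb{C}^{d+1},\qquad (\dot l_j)\longmapsto\Bigl(\bigl\langle\textstyle\sum_{j\neq i}(F_i/l_j)\dot l_j,\,F\bigr\rangle\Bigr)_{i=1}^{d+1},
\]
has rank $d+1$. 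Thus the conjecture reduces to producing one pair $((l_i),F)\in\Sigma$ at which the $(d+1)\times 2(d+1)$ matrix of $A$ attains full rank.

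For the construction I would use the cyclic specialization $l_j=y_0+\zeta^{\,j}y_1+\zeta^{\,2j}y_2$ for $j=0,\dots,d$, where $\zeta$ is a primitive $(d+1)$-th root of unity; a Vandermonde computation shows no three of these lines are concurrent. The cyclic group $\mathbb{Z}/(d+1)\mathbb{Z}$ permutes the lines and therefore block-diagonalises both the apolarity system and the map $A$ along the character isotypic decomposition of $S_d$ and $T_d$. One would pick $F$ to be a character eigenvector of a suitable weight: applying the generator of the cyclic group to $F_iF=0$ cycles the conditions, so the system collapses within each isotypic component to a single equation, and the rank of $A$ reduces to non-vanishing of products of Vandermonde-type determinants in $\zeta$ that can in principle be read off directly.

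The main obstacle is that the coefficients of $F$ enter linearly into the matrix of $A$, so the two choices must be made compatibly: $F$ must lie in the affine subspace $\{F:F_iF=0\}$ and simultaneously keep $A$ at maximal rank, and the isotypic constraints imposed by the symmetry may force $F$ into a subspace that drops the rank. If the character-theoretic approach does not produce a non-degenerate $F$, the fallback is to bootstrap from the base cases $d\leq 13$ of Lemma \ref{jaclemma}: the Jacobian rank is lower semicontinuous, so maximal rank at a specific point propagates to an open neighborhood, and one would then try to connect the cases $d$ and $d+1$ by colliding an extra line $l_{d+2}$ onto $l_{d+1}$ and tracking both the apolarity and the Jacobian rank through the degeneration. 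Making this degeneration rigorous—preserving the full rank of $A$ in the limit while producing a valid star configuration $\mathbb{X}(d+2)$—is the genuine technical heart of the conjecture.
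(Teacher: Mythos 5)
This statement is stated in the paper as a \emph{conjecture}: the authors prove it only for $d=3$ (via the complete intersection observation of Remark \ref{remd=3}) and verify it computationally for $d\leq 13$ using Lemma \ref{jaclemma}. Your proposal does not close this gap. The setup is sound and is essentially an intrinsic reformulation of the paper's Subsection \ref{RJ}: reducing dominance of $\pi_2$ to surjectivity of the linear map $A$ built from the generators $F_i=\prod_{j\neq i}l_j$ of $I(\mathbb{X}(d+1))$ is a correct and cleaner version of the Jacobian criterion of Lemma \ref{jaclemma}. But the decisive step --- exhibiting, uniformly in $d$, one point $((l_i),F)\in\Sigma(d,d+1,2)$ at which $A$ has rank $d+1$ --- is never carried out. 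The claim that the cyclic symmetry ``collapses'' the system to Vandermonde determinants ``that can in principle be read off'' is precisely the computation that would constitute the proof, and you yourself identify the obstruction: restricting $F$ to a character eigenspace of the $\mathbb{Z}/(d+1)\mathbb{Z}$-action confines it to a proper subspace of the fiber $\langle L_1^d,\dots,L_{\binom{d+1}{2}}^d\rangle$, where the rank of $A$ (which depends linearly on $F$) may well drop. Since semicontinuity only helps once a full-rank point is found, nothing is proved until that nonvanishing is established.

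The fallback is also not viable as described. The Jacobian criterion for degree $d$ forms and $\mathbb{X}(d+1)$ lives in $\mathbb{P}(S_d)$, while the one for degree $d+1$ forms and $\mathbb{X}(d+2)$ lives in $\mathbb{P}(S_{d+1})$; colliding $l_{d+2}$ onto $l_{d+1}$ degenerates the configuration but does not change the degree of $F$, so it does not connect the case $(d,d+1,2)$ to $(d+1,d+2,2)$. Moreover the limit of an $\mathbb{X}(d+2)$ under such a collision is not an $\mathbb{X}(d+1)$ (the scheme acquires points on the doubled line and embedded structure), so even the configuration side of the degeneration is uncontrolled. In short, your proposal is a reasonable research program --- and its first half coincides with the paper's own computational strategy --- but it contains no proof of Conjecture \ref{conj}, which remains open for $d\geq 14$.
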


\begin{remark}\label{remd=3}
One possible theoretical approach to Conjecture \ref{conj}, successful for the 3-ple $ (3,4,2) $, is the following. It is easy to see that the generic ternary cubic $F$ has a an apolar set of four points which are the complete intersection of two (reducible) conics, that is
\[
F^\perp\supset (l_1l_2,l_3l_4),
\]
thus $ F^\perp\supset I=(l_2l_3l_4,l_1l_3l_4,l_1l_2l_4,l_1l_2l_3) $ and $I$ is the ideal of a star configuration $\mathbb{X}(4)$. Hence the conjecture is proved for $d=3$.
\end{remark}

\begin{remark}

One possible computational approach to Conjecture \ref{conj}, successful for $d\leq 13$, uses Lemma \ref{jaclemma}. 

\end{remark}


\section{Final remarks}

Our main results are {\em generic} results, that is they hold for the generic degree $d$ form in $n+1$ variables, and not for {\em any} such a form. However, in some cases, we can show that our results hold for any form. In what follows, we deal with ternary cubics, that is $n=2$ and $d=3$.

\begin{lemma}\label{lemcusps}
Any ternary cuspidal cubic is projectively equivalent to $V(  x_0^3-x_1^2x_2) $.
\end{lemma}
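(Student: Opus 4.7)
The plan is to produce, via three normalisation stages, a projective change of coordinates bringing any ternary cuspidal cubic $V(F)$ into the standard form $V(x_0^3 - x_1^2 x_2)$.

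First, using the transitive action of $\mathrm{PGL}_3(\mathbb{C})$ on $\mathbb{P}^2$, I move the cusp to $P = [0:0:1]$ and expand
\[
F = q(x_0, x_1)\, x_2 + c(x_0, x_1),
\]
with $q$ a binary quadratic and $c$ a binary cubic. Since $P$ is a cusp (a double point with a single tangent direction), $q$ must be a nonzero perfect square, whence a linear change in $(x_0, x_1)$ normalises it to $q = x_1^2$. Irreducibility of the cuspidal cubic then rules out $x_1 \mid c$, so the coefficient of $x_0^3$ in $c$ is nonzero, and after rescaling $x_2$ and $F$ I may assume
\[
F = x_0^3 + b\, x_0^2 x_1 + e\, x_0 x_1^2 + d\, x_1^3 - x_1^2 x_2,
\]
for some $b, e, d \in \mathbb{C}$.

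Second, I eliminate the three extraneous binary monomials in a triangular fashion. The substitution $x_0 \mapsto x_0 - \tfrac{b}{3} x_1$ kills the $x_0^2 x_1$ coefficient (and only redistributes weight among $x_0^3$, $x_0 x_1^2$, and $x_1^3$, while leaving the $x_1^2 x_2$ term untouched). The subsequent substitution $x_2 \mapsto x_2 + \alpha\, x_0 + \beta\, x_1$ alters only the coefficients of $x_0 x_1^2$ and $x_1^3$, via the contribution $-x_1^2(\alpha x_0 + \beta x_1)$, so appropriate choices of $\alpha$ and $\beta$ zero them out without resurrecting the previously cancelled $x_0^2 x_1$ coefficient. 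What remains is precisely $x_0^3 - x_1^2 x_2$.

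The only non-routine input is the cusp vs.\ node dichotomy at the very first step: the tangent cone of $F$ at the double point $P$ is $V(q)$, and it is a double line precisely when $q$ is a perfect square, which is the defining feature of a cusp as opposed to a node. Everything else is elementary linear algebra, organised so that each substitution is triangular with respect to the monomials already normalised, ensuring no back-tracking is needed.
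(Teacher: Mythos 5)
Your argument is correct and complete. For the record, the paper does not prove this lemma at all: its ``proof'' is a one-line citation to Gibson, \emph{Elementary Geometry of Algebraic Curves}, Lemma 15.5, which carries out essentially the same classical normalisation you describe. So your contribution is a self-contained replacement for that reference. The chain of reductions is sound: placing the double point at $[0{:}0{:}1]$ forces the absence of the $x_2^3$ and $x_2^2\cdot(\text{linear})$ terms, the tangent cone $V(q)$ being a double line is exactly the cusp condition, irreducibility (implicit in the term ``cuspidal cubic'', and consistent with the paper's classification table where the cusp is a separate type from the reducible cubics) gives $x_1\nmid c$ and hence a nonzero $x_0^3$ coefficient, and the two triangular substitutions $x_0\mapsto x_0-\tfrac{b}{3}x_1$ and $x_2\mapsto x_2+\alpha x_0+\beta x_1$ visibly commute with what has already been normalised. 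The only point worth making explicit if you write this up in full is the step from ``double point with a single tangent direction'' to the statement that $q$ is a nonzero perfect square: you should note that $q\equiv 0$ is excluded because it would make $P$ a triple point, and that for an irreducible cubic a unibranch double point with double-line tangent cone is automatically an ordinary cusp, so no further analytic-type analysis is needed. The trade-off is the usual one: the citation keeps the paper short, while your version makes it self-contained at the cost of half a page of elementary coordinate changes.
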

\begin{proof}
See, \cite[Lemma 15.5]{Gibson:1998:EGA:298604}.
\end{proof}

\begin{proposition}\label{propnongen}
Any ternary cuspidal cubic has an apolar star configuration $ \mathbb{X}(4) $.
\end{proposition}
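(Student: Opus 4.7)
The strategy is to reduce to a normal form and then exhibit four lines explicitly. By Lemma \ref{lemcusps}, every ternary cuspidal cubic is projectively equivalent to $F=x_0^3-x_1^2x_2$, so it suffices to construct an apolar $\mathbb{X}(4)$ for this particular $F$. By Theorem \ref{theoremin"n"} (applied with $r=4$, $n=2$), this amounts to producing four linear forms $l_1,l_2,l_3,l_4\in T$ in general position (no three concurrent) such that each of the four generating cubics $l_{\hat{\imath}}:=\prod_{j\neq i}l_j$ of $I(\mathbb{X}(4))$ lies in $F^\perp_3$; the inclusion $I(\mathbb{X}(4))\subseteq F^\perp$ in higher degrees is automatic because $(F^\perp)_j=T_j$ for $j\geq d+1=4$.

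A direct computation with the standard monomial basis shows that for $g\in T_3$ one has $g(F)=6\,c_{y_0^3}(g)-2\,c_{y_1^2y_2}(g)$, so $F^\perp_3$ is the hyperplane of $T_3$ cut out by $3\,c_{y_0^3}(g)=c_{y_1^2y_2}(g)$. One would like to mimic Remark \ref{remd=3} and exhibit two reducible conics $l_1l_2,\,l_3l_4\in F^\perp_2$ whose four factors are in general position. However, an analogous calculation gives $F^\perp_2=\langle y_0y_1,\,y_0y_2,\,y_2^2\rangle$, and the rank condition on the associated symmetric matrix forces every reducible conic in this space to factor as either $y_0\cdot(\alpha y_1+\beta y_2)$ or $y_2\cdot(\beta y_0+\gamma y_2)$. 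In every pairing of two such conics three of the four resulting lines meet at $(0:1:0)$, so the Remark \ref{remd=3} recipe fails for the cuspidal normal form.

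I therefore propose to search directly among quadruples of lines. Fix three simple lines
\[
l_1=y_0,\qquad l_2=y_1,\qquad l_3=y_0+y_1+y_2,
\]
and let $l_4=ay_0+by_1+cy_2$ be the unknown fourth line. The products $l_1l_2l_3$ and $l_1l_2l_4$ have neither $y_0^3$ nor $y_1^2y_2$ in their support, so the apolarity conditions for $l_{\hat 4}$ and $l_{\hat 3}$ are automatic. The remaining two conditions, for $l_{\hat 2}=l_1l_3l_4$ and $l_{\hat 1}=l_2l_3l_4$, reduce to the two linear equations $3a=0$ and $b+c=0$ in $(a,b,c)$, whose solution is $l_4=y_1-y_2$ (up to scalar).

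The proof concludes with two mechanical verifications: (i) apply each of the four cubics $l_1l_2l_3,\,l_1l_2l_4,\,l_1l_3l_4,\,l_2l_3l_4$ to $F$ and confirm the result is zero, and (ii) compute the six pairwise intersections of $y_0,\,y_1,\,y_0+y_1+y_2,\,y_1-y_2$ and confirm they are all distinct, equivalently that no three of these lines are concurrent. Once these checks are made, the four cubics are linearly independent elements of $I(\mathbb{X}(4))_3$, which by Theorem \ref{theoremin"n"} has dimension $4$, so they generate $I(\mathbb{X}(4))$; the inclusion $I(\mathbb{X}(4))\subseteq F^\perp$ then follows from (i) in degree $3$ and from $(F^\perp)_j=T_j$ for $j\geq 4$. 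The only real obstacle is conceptual rather than computational, namely noticing that the complete-intersection approach of Remark \ref{remd=3} breaks down here and that one must search for the four lines directly instead of for a pair of reducible conics.
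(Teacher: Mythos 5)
Your proposal is correct and follows essentially the same route as the paper: reduce to the normal form $x_0^3-x_1^2x_2$ via Lemma \ref{lemcusps} and exhibit an explicit apolar star configuration; in fact your four lines $y_0,\ y_1,\ y_0+y_1+y_2,\ y_1-y_2$ are exactly the ones the paper produces with \verb|Macaulay2|, and your hand derivation of the fourth line together with the concurrency check is a valid, more self-contained substitute for the computer verification. Your side observation that no pair of reducible conics in $F^\perp_2$ gives four lines in general position is also correct, and it usefully explains why this case requires a direct search rather than the complete-intersection trick of Remark \ref{remd=3}.
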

\begin{proof}
By Lemma \ref{lemcusps}, it is enough to show that the normal form ${C}= x_0^3-x_1^2x_2 $ has an apolar star configuration $\mathbb{X}(4)$.
By \cite{carlini2012thesolution}, we know that $ \operatorname{rk}(x_0^3-x_1^2x_2 )=4 $.  Computing we get that
$${C}^\perp=(y_2^{2},y_0 y_2,y_0 y_1,y_1^{3},y_0^{3}+3 y_1^{2} y_2).$$
Using \verb|Macaulay2| we construct linear forms
\[
{l}_1=y_0,\ {l}_2= y_1,\ {l}_3= y_1-y_2,\ {l}_4=y_0+y_1+y_2,
\]
defining a star configuration $\mathbb{X}(4)$ apolar to ${C} $. This completes the proof.
\end{proof}

\begin{lemma}\label{cone+line}
Any rank five ternary cubic is projectively equivalent to $V( x_0(x_2^2+x_0x_1)) $.
\end{lemma}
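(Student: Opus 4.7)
The plan is to reduce the statement to a normal-form calculation via a classification argument. By \cite{carlini2012thesolution}, the Waring rank of ternary cubics is completely known and takes values in $\{1,2,3,4,5\}$. I would walk through the standard classification of plane cubics up to projective equivalence---smooth cubics (parametrized by the $j$-invariant), irreducible nodal or cuspidal cubics, triangles $V(x_0 x_1 x_2)$, three concurrent lines, smooth-conic-plus-transversal-line, smooth-conic-plus-tangent-line, and the non-reduced cubics (a double line plus a line, a triple line)---and observe that every class other than conic-plus-tangent-line admits a sum-of-cubes decomposition with at most $4$ summands. The smooth and nodal cases are classical; the cuspidal case is handled by Proposition \ref{propnongen}; triangles and transversal conic-plus-line decompose explicitly into four cubes; and non-reduced cubics are products of linear forms with monomials, whose Waring ranks are computed in \cite{carlini2012thesolution}. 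Consequently the only projective class capable of realizing rank $5$ is that of a smooth conic together with a tangent line.

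For the second (normalization) step, let $F = \ell \cdot C$ where $\ell$ is a line tangent to the smooth conic $C$ at a point $p$. Choose coordinates so that $\ell = \{x_0 = 0\}$ and $p = [0:1:0]$, and write
\[
C = \alpha x_0^2 + \beta x_0 x_1 + \gamma x_0 x_2 + \delta x_1^2 + \epsilon x_1 x_2 + \zeta x_2^2.
\]
The condition $C(p)=0$ yields $\delta=0$, while the condition that the tangent line to $C$ at $p$ equals $\{x_0=0\}$ yields $\epsilon=0$ and $\beta\neq 0$; smoothness of $C$ then forces $\zeta\neq 0$. A substitution $x_1\mapsto x_1+ax_0+bx_2$ with suitable $a,b$ eliminates the $\alpha x_0^2$ and $\gamma x_0 x_2$ terms, and rescaling $x_1$ and $x_2$ brings $C$ to the form $x_0 x_1 + x_2^2$. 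Hence $F$ is projectively equivalent to $x_0(x_2^2 + x_0 x_1)$, as required.

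The main obstacle is the classification step: controlling the Waring rank simultaneously across all projective types of ternary cubics. Once this is assembled from the literature (principally \cite{carlini2012thesolution}), the normalization is routine. A more self-contained alternative would start from $\mathrm{rk}(F)=5$ and use the Apolarity Lemma (Lemma \ref{ourapolaritylemma}) to argue directly that $F^\perp$ contains no radical ideal of $4$ points; this forces $F$ to be reducible with the geometric shape of a smooth conic plus a tangent line, after which the same change of coordinates produces the claimed normal form.
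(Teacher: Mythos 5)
Your argument is correct, but it takes a genuinely different route from the paper: the paper's entire proof of this lemma is a citation to \cite[Lemma 15.6]{Gibson:1998:EGA:298604}, where the normal form for a smooth conic plus a tangent line is worked out, so no classification-of-ranks argument appears in the text at all. What you do instead is reassemble that classification yourself: you run through all projective types of plane cubics, check (from \cite{carlini2012thesolution} for the monomial types, from classical facts for the smooth and nodal cases, and from Proposition \ref{propnongen} via the Apolarity Lemma for the cusp) that every type except conic-plus-tangent-line has rank at most $4$, and then carry out the normalization by hand; your coordinate computation is sound (the tangency at $p=[0:1:0]$ forces $\delta=\epsilon=0$ and $\beta\neq 0$, the discriminant $-\beta^2\zeta/4$ forces $\zeta\neq 0$, and the shear in $x_1$ plus rescaling finishes). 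The trade-off is clear: the paper's citation is shorter and outsources both the completeness of the classification and the normal form, while your version makes the logical dependence on the rank table explicit but leans on several classical rank computations ("the smooth and nodal cases are classical") that would themselves need references to be fully rigorous; your closing "self-contained alternative" via apolarity is only a sketch and should not be relied on as written. Either way the statement is established, so this is a legitimate, if heavier, substitute for the paper's one-line proof.
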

\begin{proof}
See \cite[Lemma 15.6]{Gibson:1998:EGA:298604}.
\end{proof}

\begin{proposition}\label{probline}
There exists an apolar star configuration $ \mathbb{X}(4) $ for any ternary cubic of rank five (conic plus tangent line).
\end{proposition}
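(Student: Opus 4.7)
The plan is to mirror the strategy of Proposition \ref{propnongen}. By Lemma \ref{cone+line}, every rank-five ternary cubic is projectively equivalent to the normal form $C = x_0(x_2^2 + x_0 x_1)$. Since both apolarity and the property of being a star configuration are preserved under linear change of coordinates (a linear automorphism of $T$ sends $C^\perp$ to $\tilde C{}^\perp$ and sends the ideal of a star configuration $\mathbb{X}(4)$ to the ideal of another star configuration $\mathbb{X}(4)$), it suffices to produce a single star configuration $\mathbb{X}(4)$ apolar to $C$.

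First I would compute the apolar ideal $C^\perp \subseteq T = \mathbb{C}[y_0,y_1,y_2]$ explicitly (by hand or via \verb|Macaulay2|). Then I would search for four linear forms $\ell_1,\ldots,\ell_4 \in T_1$ with no three concurrent (so that the $n$-wise intersection yields $\binom{4}{2}=6$ distinct points) such that the ideal $J = \bigcap_{1\le i<j\le 4}(\ell_i,\ell_j)$ of the resulting star configuration satisfies $J\subseteq C^\perp$. By Theorem \ref{theoremin"n"}, the ideal $J$ is generated by the four cubics of the form $\ell_j\ell_k\ell_l$ with $\{i,j,k,l\}=\{1,2,3,4\}$, so the verification reduces to checking that each of these four cubics annihilates $C$ under the differentiation action of $T$ on $S$. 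The Apolarity Lemma (Lemma \ref{ourapolaritylemma}) then delivers the desired apolar star configuration.

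The main obstacle will be finding explicit forms $\ell_i$ that work, since $C$ has rank five rather than four, so the six points of $\mathbb{X}(4)$ must carry a nontrivial linear relation between the sixth powers $L_i^3$. A natural first attempt is to exploit the geometry of $C$: the cubic splits as the union of the line $x_0=0$ and the conic $x_2^2+x_0 x_1=0$, which are tangent at $[0:1:0]$, so one expects a successful star configuration to reflect this tangency. Concretely, I would try mild perturbations of the forms used in Proposition \ref{propnongen}, e.g.\ $\ell_1 = y_0,\ \ell_2 = y_1,\ \ell_3 = y_2,\ \ell_4 = y_0+y_1+y_2$, or variants thereof, and adjust coefficients. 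Once explicit $\ell_i$ are produced, the inclusion $J\subseteq C^\perp$ is a finite check of four cubic differential operators applied to $C$, and the general-position hypothesis is verified by inspection. I expect the proof to be concluded by a single short explicit computation, analogous in length to Proposition \ref{propnongen}, with \verb|Macaulay2| providing the certificate.
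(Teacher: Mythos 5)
Your proposal follows essentially the same route as the paper's proof: reduce to the normal form $G=x_0(x_2^2+x_0x_1)$ via Lemma \ref{cone+line}, compute $G^\perp$, and exhibit four explicit linear forms (found with \texttt{Macaulay2}) whose star configuration ideal is contained in $G^\perp$. The only piece missing is the explicit certificate itself; note that the paper's forms have coefficients such as $47/132$ and $862/33$, so a genuine computer search, rather than a mild perturbation of the coordinate configuration $y_0,y_1,y_2,y_0+y_1+y_2$, is what actually produces the answer---but that is precisely the computation you describe.
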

\begin{proof}
Using Lemma \ref{cone+line}, we only need to find an apolar star configuration $ \mathbb{X}(4) $ for the normal form of conic plus tangent type ${G}= x_0(x_2^2+x_0x_1) $. Computing we get
\[
{G}^\perp=({y}_{1} {y}_{2},{y}_{1}^{2},{y}_{0} {y}_{1}-{y}_{2}^{2},{y}_{0}^{2} {y}_{2},{y}_{0}^{3}).
\]
Using \verb|Macaulay2| we construct linear forms
\[
l_1={y}_{0}+(47/132) {y}_{1}-3 {y}_{2},\  l_2=4 {y}_{0}-(20/3) {y}_{1}-10 {y}_{2},
\]
\[
l_3=2 {y}_{0}+(862/33) {y}_{1}+7 {y}_{2},\  l_4=11 {y}_{0}-(421/12) {y}_{1}+6 {y}_{2}
\]
defining a star configuration $ \mathbb{X}(4) $ apolar to ${G} $. This completes the proof.
\end{proof}

In conclusion, we have the following result.

\begin{theorem}
Any ternary cubic form has an apolar star configuration $ \mathbb{X}(4) $.
\end{theorem}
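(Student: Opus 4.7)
The plan is to combine the three existence results already established (Remark \ref{remd=3} for the generic cubic, Proposition \ref{propnongen} for the cuspidal cubic, and Proposition \ref{probline} for the rank-$5$ cubic) with a case-by-case treatment of the remaining projective orbits of ternary cubics. The key reduction is that admitting an apolar star configuration $\mathbb{X}(4)$ is invariant under the natural action of $\operatorname{PGL}_3(\mathbb{C})$ on $\mathbb{P}(S_3)$: a linear change of variables carries any configuration of four lines in general position to another such configuration, and preserves the apolarity condition. Hence it suffices to verify the property on one representative of each $\operatorname{PGL}_3$-orbit.

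I would then invoke the classical projective classification of plane cubic curves: up to projective equivalence, every ternary cubic is a smooth cubic (parametrized by the $j$-invariant), a nodal cubic, a cuspidal cubic, or one of the reducible types (smooth conic plus transverse line, smooth conic plus tangent line, three non-concurrent lines, three concurrent lines, line plus double line, or triple line). The cuspidal orbit is covered by Proposition \ref{propnongen}, and the conic-plus-tangent-line orbit --- which is exactly the locus of rank-$5$ ternary cubics --- by Proposition \ref{probline}. For each remaining orbit I would first try the Cayley--Bacharach style argument of Remark \ref{remd=3}: exhibit in $(F^\perp)_2$ a pair of reducible conics $(l_1l_2,\, l_3l_4)$ with no common linear factor whose complete intersection is $4$ distinct points in general position, so that the four linear factors define an apolar $\mathbb{X}(4)$.

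When the Cayley--Bacharach approach degenerates --- precisely for those orbits in which every rank-$\leq 2$ conic in $(F^\perp)_2$ passes through a fixed base point of $\mathbb{P}^2$, forcing three of the four lines to be concurrent --- the fallback is to construct the four linear forms $l_1, l_2, l_3, l_4$ in general position directly in degree $3$ and verify that each triple product $l_i l_j l_k$ lies in the $9$-dimensional space $(F^\perp)_3$. The main obstacle is this orbit-by-orbit verification; although a naive parameter count (four cubic conditions on eight projective parameters of the four lines) always leaves a $4$-dimensional family of candidate solutions for any nonzero $F$, exhibiting one with no three lines concurrent in the more rigid orbits (such as the triple line, the triangle, or the line-plus-double-line) requires a carefully chosen ansatz or a \verb|Macaulay2| verification, in the spirit of the explicit constructions carried out in the proofs of Propositions \ref{propnongen} and \ref{probline}.
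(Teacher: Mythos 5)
Your proposal is correct and follows essentially the same route as the paper: reduce to one representative per projective orbit (equivalently, per Waring-rank type), invoke Remark \ref{remd=3} for the rank-four orbits and Propositions \ref{propnongen} and \ref{probline} for the cusp and the conic-plus-tangent-line, and treat the remaining orbits by explicit construction. The only difference is that for the low-rank orbits (triple line, concurrent lines, double line plus line, Fermat) your fallback of checking all four triple products in $(F^\perp)_3$ is heavier than necessary: since a minimal apolar set of $s\leq 3$ points can always be placed among the six nodes of a star configuration on four general lines, monotonicity of apolarity ($\mathbb{X}\supseteq\mathbb{Y}$ and $I(\mathbb{Y})\subseteq F^\perp$ imply $I(\mathbb{X})\subseteq F^\perp$) settles those cases immediately, which is what the paper means by ``easy to see.''
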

\begin{proof}
It is easy to see that the ternary cubics of rank one (triple line), rank two (three concurrent lines), and rank three (double line + line and smooth) have an apolar star configurations $ \mathbb{X}(r) $ for $ r\geq 3 $. For the ternary cubics of rank four including three non-concurrent lines, line + conic (meeting transversally), nodal, and  general smooth, see Remark \ref{remd=3}. For the case of ternary  cuspidal cubic (rank four) we refer to either Remark \ref{remd=3} or Proposition \ref{propnongen}. The result for the ternary cubic of rank five (line + tangent conic) follows from Proposition \ref{probline}. The proof is now completed.
\end{proof}

\end{document}